\newtheorem{theorem}{Theorem}
\theoremstyle{plain}
\newtheorem{corollary}{Corollary}
\newtheorem{example}{Example}
\newtheorem{remark}{Remark}
\numberwithin{equation}{section}
\begin{document}
\title[Integral Inequalities for $K_{s}^{2}$]{On Some New Integral
Inequalities for $K_{s}^{2}$}
\author{Mevl\"{u}t TUN\c{C}}
\address{University of Kilis 7 Aralik, Faculty of Arts and Sciences,
Department of Mathematics, 79000, Kilis, Turkey}

\begin{abstract}
In this paper we establish some new inequalities of Hadamard-type for
product of convex and $s-$convex functions in the second sense.
\end{abstract}

\keywords{convexity, $s$-convex, Hadamard's inequality}
\email{mevluttunc@kilis.edu.tr}
\subjclass{26A51, 26D15}
\maketitle

\section{Introduction}

A largely applied inequality for convex functions, due to its geometrical
significance, is Hadamard's inequality (see $\left[ \ref{ssd}\right] ,\left[ %
\ref{had}\right] $ or $\left[ \ref{pach}\right] $) which has generated a
wide range of directions for extension and a rich mathematical literature.
The following definitions are well known in the mathematical literature: a
function $f:I\rightarrow 
\mathbb{R}
,\emptyset \neq I\subseteq 
\mathbb{R}
,$ is said to be convex on $I$ if inequality

\begin{equation}
f\left( tx+\left( 1-t\right) y\right) \leq tf\left( x\right) +\left(
1-t\right) f\left( y\right)  \label{1.1}
\end{equation}%
holds for all $x,y\in I$ and $t\in \left[ 0,1\right] $. Geometrically, this
means that if $P,Q$ and $R$ are three distinct points on the graph of $f$
with $Q$ between $P$ and $R$, then $Q$ is on or below chord $PR$.

In the paper $[\ref{hudz}]$ Hudzik and Maligranda considered, among others,
the class of functions which are $s-$convex in the second sense. This class
is defined in the following way: [\ref{breck}] A function $f:\left[ 0,\infty
\right) \rightarrow 
\mathbb{R}
$ is said to be $s-$convex in the second sense if%
\begin{equation}
f\left( tx+\left( 1-t\right) y\right) \leq t^{s}f\left( x\right) +\left(
1-t\right) ^{s}f\left( y\right)  \label{1.2}
\end{equation}%
holds for all $x,y\in \left[ 0,\infty \right) ,$ $t\in \left[ 0,1\right] $
and for some fixed $s\in \left( 0,1\right] $. The class of $s-$convex
functions in the second sense is usually denoted with $K_{s}^{2}$.

It can be easily seen that for $s=1,$ $s-$convexity reduces to ordinary
convexity of functions defined on $\left[ 0,\infty \right) $.

In the same paper $[\ref{hudz}]$ Hudzik and Maligranda proved that if $s\in
\left( 0,1\right) ,$ $f\in K_{s}^{2}$ implies $f\left( \left[ 0,\infty
\right) \right) \subseteq \left[ 0,\infty \right) $,i.e., they proved that
all functions from $K_{s}^{2}$, $s\in \left( 0,1\right) ,$ are nonnegative.

\begin{example}
\label{e1}[\ref{hudz}]. Let $s\in \left( 0,1\right) $ and $a,b,c\in 
\mathbb{R}
$. We define function $f:\left[ 0,\infty \right) \rightarrow 
\mathbb{R}
$ as%
\begin{equation}
f\left( t\right) =\left\{ 
\begin{array}{l}
{a,}\text{ \ \ \ \ \ \ \ \ \ \ \ \ \ }{t=0,} \\ 
bt^{s}+c,\text{ \ \ \ \ \ }t>0.%
\end{array}%
\right.  \label{1.4}
\end{equation}%
It can be easily checked that

(1) If $b\geq 0$ and $0\leq c\leq a$, then $f\in K_{s}^{2}$

(2) If $b>0$ and $c<0$, then $f\notin K_{s}^{2}$
\end{example}

Many important inequalities are established for the class of convex
functions, but one of the most famous is so called Hermite-Hadamard
inequality (or Hadamard's inequality). This double inequality is stated as
follows (see for example [\ref{ppt}, p.137]): let $f$ be a convex function
on $\left[ a,b\right] \subset 
\mathbb{R}
$, where $a\neq b$. Then%
\begin{equation}
f\left( \frac{a+b}{2}\right) \leq \frac{1}{b-a}\int_{a}^{b}f\left( x\right)
dx\leq \frac{f\left( a\right) +f\left( b\right) }{2}.  \label{1.5}
\end{equation}

In the paper [\ref{tunc}] Tun\c{c} established one new Hadamard-type
inequality for products of convex functions. It is given in the next theorem.

\begin{theorem}
\label{t1}[\ref{tunc}]Let $f,g:\left[ a,b\right] \rightarrow 
\mathbb{R}
$ be two convex functions and $fg\in L^{1}\left( \left[ a,b\right] \right) $%
. Then, 
\begin{eqnarray}
&&\frac{1}{\left( b-a\right) ^{2}}\int_{a}^{b}\left( b-x\right) \left(
f\left( a\right) g\left( x\right) +g\left( a\right) f\left( x\right) \right)
dx  \notag \\
&&+\frac{1}{\left( b-a\right) ^{2}}\int_{a}^{b}\left( x-a\right) \left(
f\left( b\right) g\left( x\right) +g\left( b\right) f\left( x\right) \right)
dx  \label{1.6} \\
&\leq &\frac{1}{b-a}\int_{a}^{b}f\left( x\right) g\left( x\right) dx+\frac{%
M\left( a,b\right) }{3}+\frac{N\left( a,b\right) }{6},  \notag
\end{eqnarray}%
where $M(a,b)=f(a)g(a)+f(b)g(b),$ $N(a,b)=f(a)g(b)+f(b)g(a).$
\end{theorem}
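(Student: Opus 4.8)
The plan is to reduce the two-line estimate to a single pointwise inequality via the change of variables $x=ta+(1-t)b$, $t\in[0,1]$. Under this substitution one has $b-x=t(b-a)$, $x-a=(1-t)(b-a)$ and $dx=-(b-a)\,dt$ (with the limits of integration interchanged), so after cancelling the factors $(b-a)^2$ the left-hand side of $(\ref{1.6})$ becomes
\[
\int_0^1\Big\{\big[tf(a)+(1-t)f(b)\big]\,g\big(ta+(1-t)b\big)+\big[tg(a)+(1-t)g(b)\big]\,f\big(ta+(1-t)b\big)\Big\}\,dt .
\]
I would abbreviate $h(t)=ta+(1-t)b$, $A(t)=tf(a)+(1-t)f(b)$ and $B(t)=tg(a)+(1-t)g(b)$, so that the transformed left-hand side is $\int_0^1\big(A(t)g(h(t))+B(t)f(h(t))\big)\,dt$.

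Next I would apply convexity in the form $(\ref{1.1})$, which gives $f(h(t))\le A(t)$ and $g(h(t))\le B(t)$ for every $t\in[0,1]$, i.e. $A(t)-f(h(t))\ge 0$ and $B(t)-g(h(t))\ge 0$. The one genuinely nontrivial step is the remark that the product of these two nonnegative quantities is nonnegative; expanding it yields the pointwise inequality
\[
A(t)g(h(t))+B(t)f(h(t))\ \le\ A(t)B(t)+f(h(t))g(h(t)),\qquad t\in[0,1].
\]
It is worth noting that no nonnegativity assumption on $f$ or $g$ enters here, since we never multiply two chord inequalities together — we only multiply two differences that are guaranteed to be nonnegative.

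To finish, I would integrate this pointwise bound over $[0,1]$, which estimates the transformed left-hand side by $\int_0^1 A(t)B(t)\,dt+\int_0^1 f(h(t))g(h(t))\,dt$. A routine computation using $\int_0^1 t^2\,dt=\int_0^1(1-t)^2\,dt=\tfrac13$ and $\int_0^1 t(1-t)\,dt=\tfrac16$ gives
\[
\int_0^1 A(t)B(t)\,dt=\frac{f(a)g(a)+f(b)g(b)}{3}+\frac{f(a)g(b)+f(b)g(a)}{6}=\frac{M(a,b)}{3}+\frac{N(a,b)}{6},
\]
while reversing the change of variables gives $\int_0^1 f(h(t))g(h(t))\,dt=\frac{1}{b-a}\int_a^b f(x)g(x)\,dx$ (the hypothesis $fg\in L^1([a,b])$ is exactly what makes all these integrals meaningful). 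Assembling the three pieces reproduces the right-hand side of $(\ref{1.6})$ verbatim. The main obstacle is not analytic but organizational: keeping track of which of $t$, $1-t$ corresponds to $b-x$, $x-a$ and which chord multiplies which function; once the substitution is set up correctly, the entire content of the theorem sits in the one-line pointwise inequality above.
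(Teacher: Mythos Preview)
Your argument is correct and coincides with the paper's own method: the paper does not prove Theorem~\ref{t1} directly (it is quoted from \cite{tunc}), but its proof of Theorem~\ref{t2} specialized to $s=1$ is exactly your substitution $x=ta+(1-t)b$ followed by the pointwise inequality obtained from expanding $\big(A(t)-f(h(t))\big)\big(B(t)-g(h(t))\big)\ge 0$ and integrating over $[0,1]$. The only cosmetic difference is the order of operations---the paper derives the pointwise inequality first and then changes variables, while you change variables first---and the paper phrases the Beta-integral evaluations via $\beta(2,2)$ rather than computing $\int_0^1 t^2\,dt$ and $\int_0^1 t(1-t)\,dt$ directly.
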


\bigskip \bigskip The main purpose of this paper is to establish new
inequalities as given in Theorem \ref{t1}, but now for the class of $s-$%
convex functions in the second sense by using the elementary inequalities.

\section{Main Results}

In the our next theorems we will also make use of Beta function of Euler
type, which is for $u,v>0$ defined as%
\begin{equation*}
\beta \left( u,v\right) =\int_{0}^{1}t^{u-1}\left( 1-t\right) ^{v-1}dt=\frac{%
\Gamma \left( u\right) \Gamma \left( v\right) }{\Gamma \left( u+v\right) }
\end{equation*}%
and%
\begin{equation*}
\beta \left( u,v\right) =\beta \left( v,u\right) ,
\end{equation*}%
where the gamma function, denoted by $\Gamma \left( x\right) $, provides a
generalization of factorial n to the case in which $n$ is not an integer.

\begin{theorem}
\label{t2}\bigskip Let $f,g:I\rightarrow 
\mathbb{R}
,$ $I\subset \left[ 0,\infty \right) $, $a,b\in I,$ with $a<b$\ be functions
such that $f,g$ and $fg$ are in $L^{1}\left( \left[ a,b\right] \right) .$ $f$
is convex and $g$ is $s-$convex function in the second sense on $\left[ a,b%
\right] ,$ for some $s\in \left( 0,1\right] ,$ then 
\begin{eqnarray}
&&\frac{f\left( a\right) }{\left( b-a\right) ^{2}}\int_{a}^{b}\left(
b-x\right) g\left( x\right) dx+\frac{f\left( b\right) }{\left( b-a\right)
^{2}}\int_{a}^{b}\left( x-a\right) g\left( x\right) dx  \notag \\
&&+\frac{g\left( a\right) }{\left( b-a\right) ^{s+1}}\int_{a}^{b}\left(
b-x\right) ^{s}f\left( x\right) dx+\frac{g\left( b\right) }{\left(
b-a\right) ^{s+1}}\int_{a}^{b}\left( x-a\right) ^{s}f\left( x\right) dx 
\notag \\
&\leq &\frac{1}{b-a}\int_{a}^{b}f\left( x\right) g\left( x\right) dx+\frac{%
M\left( a,b\right) }{s+2}+\frac{N\left( a,b\right) }{\left( s+1\right)
\left( s+2\right) }  \label{2.5}
\end{eqnarray}%
where $M\left( a,b\right) =f\left( a\right) g\left( a\right) +f\left(
b\right) g\left( b\right) $ and $N\left( a,b\right) =f\left( a\right)
g\left( b\right) +f\left( b\right) g\left( a\right) $.
\end{theorem}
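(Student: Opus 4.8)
\bigskip

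The plan is to run the two defining inequalities for $f$ and $g$ simultaneously and then multiply the two nonnegative quantities they produce. First I would parametrize $x=ta+(1-t)b$ with $t\in[0,1]$, so that $b-x=t(b-a)$, $x-a=(1-t)(b-a)$ and $dx=-(b-a)\,dt$. Convexity of $f$ on $[a,b]$ and $s$-convexity of $g$ in the second sense give, for every $t\in[0,1]$,
\[
f(ta+(1-t)b)\le tf(a)+(1-t)f(b),\qquad g(ta+(1-t)b)\le t^{s}g(a)+(1-t)^{s}g(b).
\]
Hence the differences $tf(a)+(1-t)f(b)-f(ta+(1-t)b)$ and $t^{s}g(a)+(1-t)^{s}g(b)-g(ta+(1-t)b)$ are both nonnegative, so their product is nonnegative as well; note that this step does not use the nonnegativity of $f$ or $g$ separately.

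Next I would expand that product and rearrange it so that the two mixed terms $\bigl(tf(a)+(1-t)f(b)\bigr)g(ta+(1-t)b)+f(ta+(1-t)b)\bigl(t^{s}g(a)+(1-t)^{s}g(b)\bigr)$ are isolated on the left, the right-hand side being $\bigl(tf(a)+(1-t)f(b)\bigr)\bigl(t^{s}g(a)+(1-t)^{s}g(b)\bigr)+f(ta+(1-t)b)g(ta+(1-t)b)$. Integrating this inequality over $t\in[0,1]$ and changing variables back to $x$, the left-hand side becomes precisely the left-hand side of $(\ref{2.5})$, since
\[
\int_{0}^{1}t\,g(ta+(1-t)b)\,dt=\frac{1}{(b-a)^{2}}\int_{a}^{b}(b-x)g(x)\,dx,\qquad \int_{0}^{1}t^{s}f(ta+(1-t)b)\,dt=\frac{1}{(b-a)^{s+1}}\int_{a}^{b}(b-x)^{s}f(x)\,dx,
\]
and likewise for the two terms carrying $(x-a)$ and $(x-a)^{s}$; here one power of $b-a$ comes from rewriting $t$ or $t^{s}$ as $(b-x)/(b-a)$ or its $s$-th power, and the remaining power from the Jacobian. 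The term $\int_{0}^{1}f(ta+(1-t)b)g(ta+(1-t)b)\,dt$ collapses to $\frac{1}{b-a}\int_{a}^{b}f(x)g(x)\,dx$.

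It then only remains to compute the purely algebraic integral $\int_{0}^{1}\bigl(tf(a)+(1-t)f(b)\bigr)\bigl(t^{s}g(a)+(1-t)^{s}g(b)\bigr)\,dt$ term by term via the Beta function: $\int_{0}^{1}t^{s+1}\,dt=\int_{0}^{1}(1-t)^{s+1}\,dt=\frac{1}{s+2}$ and $\int_{0}^{1}t(1-t)^{s}\,dt=\int_{0}^{1}(1-t)t^{s}\,dt=\beta(2,s+1)=\frac{1}{(s+1)(s+2)}$. Collecting the coefficients of $f(a)g(a),f(b)g(b)$ and of $f(a)g(b),f(b)g(a)$ yields $\frac{M(a,b)}{s+2}+\frac{N(a,b)}{(s+1)(s+2)}$, and adding the $fg$-integral gives exactly the right-hand side of $(\ref{2.5})$.

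I expect the only real obstacle to be the bookkeeping — keeping the powers $(b-a)^{2}$ and $(b-a)^{s+1}$ straight while translating between the $t$-integrals and the $x$-integrals, and evaluating the four elementary Beta-type integrals correctly; the conceptual content (multiply two nonnegative differences, rearrange, integrate) is immediate. One should also note that all the integrands are dominated by integrable functions, since $f,g,fg\in L^{1}([a,b])$ and the weights $t,t^{s},1-t,(1-t)^{s}$ are bounded on $[0,1]$, so every manipulation is legitimate; and that the special case $s=1$ recovers Theorem \ref{t1}.
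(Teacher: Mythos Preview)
Your proposal is correct and follows essentially the same route as the paper: write the convexity and $s$-convexity bounds, multiply the two nonnegative defects (the paper phrases this as the elementary inequality $(a-b)(c-d)\ge 0$), rearrange so the mixed terms sit on the left, integrate over $t\in[0,1]$, substitute $x=ta+(1-t)b$, and evaluate the resulting Beta integrals. Your added remarks on integrability and the $s=1$ reduction are consistent with the paper as well.
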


\begin{proof}
Since $f$ is convex and $g$ is $s-$convex on $\left[ a,b\right] ,$ we have%
\begin{eqnarray*}
f\left( ta+\left( 1-t\right) b\right) &\leq &tf\left( a\right) +\left(
1-t\right) f\left( b\right) \\
g\left( ta+\left( 1-t\right) b\right) &\leq &t^{s}g\left( a\right) +\left(
1-t\right) ^{s}g\left( b\right)
\end{eqnarray*}%
for all $t\in \left[ 0,1\right] .$ Now, using the elementary inequality [\ref%
{ort}, p.4] $\left( a-b\right) \left( c-d\right) \geq 0$ $\left( a,b,c,d\in 
\mathbb{R}
\text{ and }a<b,c<d\right) ,$ we get inequality:%
\begin{eqnarray*}
&&tf\left( a\right) g\left( ta+\left( 1-t\right) b\right) +\left( 1-t\right)
f\left( b\right) g\left( ta+\left( 1-t\right) b\right) \\
&&+t^{s}g\left( a\right) f\left( ta+\left( 1-t\right) b\right) +\left(
1-t\right) ^{s}g\left( b\right) f\left( ta+\left( 1-t\right) b\right) \\
&\leq &f\left( ta+\left( 1-t\right) b\right) g\left( ta+\left( 1-t\right)
b\right) +t^{s+1}f\left( a\right) g\left( a\right) \\
&&+t\left( 1-t\right) ^{s}f\left( a\right) g\left( b\right) +t^{s}\left(
1-t\right) f\left( b\right) g\left( a\right) \\
&&+\left( 1-t\right) ^{s+1}f\left( b\right) g\left( b\right)
\end{eqnarray*}%
Integrating this inequality over $t$ on $\left[ 0,1\right] $, we deduce that%
\begin{eqnarray*}
&&f\left( a\right) \int_{0}^{1}tg\left( ta+\left( 1-t\right) b\right)
dt+f\left( b\right) \int_{0}^{1}\left( 1-t\right) g\left( ta+\left(
1-t\right) b\right) dt \\
&&+g\left( a\right) \int_{0}^{1}t^{s}f\left( ta+\left( 1-t\right) b\right)
dt+g\left( b\right) \int_{0}^{1}\left( 1-t\right) ^{s}f\left( ta+\left(
1-t\right) b\right) dt \\
&\leq &\int_{0}^{1}f\left( ta+\left( 1-t\right) b\right) g\left( ta+\left(
1-t\right) b\right) dt \\
&&+f\left( a\right) g\left( a\right) \int_{0}^{1}t^{s+1}dt+f\left( a\right)
g\left( b\right) \int_{0}^{1}t\left( 1-t\right) ^{s}dt \\
&&+f\left( b\right) g\left( a\right) \int_{0}^{1}t^{s}\left( 1-t\right)
dt+f\left( b\right) g\left( b\right) \int_{0}^{1}\left( 1-t\right) ^{s+1}dt
\end{eqnarray*}%
By substituting $ta+\left( 1-t\right) b=x,$ $\left( a-b\right) dt=dx$, we
obtain%
\begin{eqnarray*}
&&f\left( a\right) \int_{0}^{1}tg\left( ta+\left( 1-t\right) b\right)
dt+f\left( b\right) \int_{0}^{1}\left( 1-t\right) g\left( ta+\left(
1-t\right) b\right) dt \\
&&+g\left( a\right) \int_{0}^{1}t^{s}f\left( ta+\left( 1-t\right) b\right)
dt+g\left( b\right) \int_{0}^{1}\left( 1-t\right) ^{s}f\left( ta+\left(
1-t\right) b\right) dt \\
&=&\frac{f\left( a\right) }{\left( b-a\right) ^{2}}\int_{a}^{b}\left(
b-x\right) g\left( x\right) dx+\frac{f\left( b\right) }{\left( b-a\right)
^{2}}\int_{a}^{b}\left( x-a\right) g\left( x\right) dx \\
&&+\frac{g\left( a\right) }{\left( b-a\right) ^{s+1}}\int_{a}^{b}\left(
b-x\right) ^{s}f\left( x\right) dx+\frac{g\left( b\right) }{\left(
b-a\right) ^{s+1}}\int_{a}^{b}\left( x-a\right) ^{s}f\left( x\right) dx \\
&\leq &\int_{0}^{1}f\left( ta+\left( 1-t\right) b\right) g\left( ta+\left(
1-t\right) b\right) dt \\
&&+f\left( a\right) g\left( a\right) \int_{0}^{1}t^{s+1}dt+f\left( a\right)
g\left( b\right) \int_{0}^{1}t\left( 1-t\right) ^{s}dt \\
&&+f\left( b\right) g\left( a\right) \int_{0}^{1}t^{s}\left( 1-t\right)
dt+f\left( b\right) g\left( b\right) \int_{0}^{1}\left( 1-t\right) ^{s+1}dt
\\
&=&\frac{1}{b-a}\int_{a}^{b}f\left( x\right) g\left( x\right) dx+\frac{%
f\left( a\right) g\left( a\right) +f\left( b\right) g\left( b\right) }{s+2}
\\
&&+f\left( a\right) g\left( b\right) \beta \left( 2,s+1\right) +f\left(
b\right) g\left( a\right) \beta \left( s+1,2\right) \\
&=&\frac{1}{b-a}\int_{a}^{b}f\left( x\right) g\left( x\right) dx+\frac{%
M\left( a,b\right) }{s+2} \\
&&+f\left( a\right) g\left( b\right) \beta \left( 2,s+1\right) +f\left(
b\right) g\left( a\right) \beta \left( 2,s+1\right) \\
&=&\frac{1}{b-a}\int_{a}^{b}f\left( x\right) g\left( x\right) dx+\frac{%
M\left( a,b\right) }{s+2}+\beta \left( 2,s+1\right) \left[ f\left( a\right)
g\left( b\right) +f\left( b\right) g\left( a\right) \right] \\
&=&\frac{1}{b-a}\int_{a}^{b}f\left( x\right) g\left( x\right) dx+\frac{%
M\left( a,b\right) }{s+2}+\frac{\Gamma \left( 2\right) \Gamma \left(
s+1\right) }{\Gamma \left( s+3\right) }N\left( a,b\right) \\
&=&\frac{1}{b-a}\int_{a}^{b}f\left( x\right) g\left( x\right) dx+\frac{%
M\left( a,b\right) }{s+2}+\frac{\Gamma \left( s+1\right) }{\Gamma \left(
s+3\right) }N\left( a,b\right) \\
&=&\frac{1}{b-a}\int_{a}^{b}f\left( x\right) g\left( x\right) dx+\frac{%
M\left( a,b\right) }{s+2}+\frac{N\left( a,b\right) }{\left( s+1\right)
\left( s+2\right) }
\end{eqnarray*}

which completes the proof.\bigskip
\end{proof}

\begin{remark}
In Theorem \ref{t2}, if we choose $s=1,$ then (\ref{2.5}) reduces to (\ref%
{1.6})
\end{remark}

\bigskip

\begin{theorem}
\label{t3}Let $f,g:I\rightarrow 
\mathbb{R}
,$ $I\subset \left[ 0,\infty \right) $, $a,b\in I,$ $a<b$\ \ be functions
such that $f,g$ and $fg$ are in $L^{1}\left( \left[ a,b\right] \right) .$ If 
$f$ is $s_{1}-$convex and $g$ is $s_{2}-$convex in the second sense on $%
\left[ a,b\right] $ for some $s_{1},s_{2}\in \left( 0,1\right] $, then%
\begin{eqnarray}
&&\frac{f\left( a\right) }{\left( b-a\right) ^{s_{1}+1}}\int_{a}^{b}\left(
b-x\right) ^{s_{1}}g\left( x\right) dx+\frac{f\left( b\right) }{\left(
b-a\right) ^{s_{1}+1}}\int_{a}^{b}\left( x-a\right) ^{s_{1}}g\left( x\right)
dx  \notag \\
&&+\frac{g\left( a\right) }{\left( b-a\right) ^{s_{2}+1}}\int_{a}^{b}\left(
b-x\right) ^{s_{2}}f\left( x\right) dx+\frac{g\left( b\right) }{\left(
b-a\right) ^{s_{2}+1}}\int_{a}^{b}\left( x-a\right) ^{s_{2}}f\left( x\right)
dx  \label{2.6} \\
&\leq &\frac{1}{b-a}\int_{a}^{b}f\left( x\right) g\left( x\right) dx+\frac{1%
}{s_{1}+s_{2}+1}\left[ M\left( a,b\right) +s_{1}s_{2}\frac{\Gamma \left(
s_{1}\right) \Gamma \left( s_{2}\right) }{\Gamma \left( s_{1}+s_{2}+1\right) 
}N\left( a,b\right) \right] ,  \notag
\end{eqnarray}%
where $M\left( a,b\right) =f\left( a\right) g\left( a\right) +f\left(
b\right) g\left( b\right) $ and $N\left( a,b\right) =f\left( a\right)
g\left( b\right) +f\left( b\right) g\left( a\right) $.
\end{theorem}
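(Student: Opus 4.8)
The plan is to reproduce the argument of Theorem \ref{t2} almost verbatim, with the single convexity of $f$ replaced by $s_1$-convexity. First, from the definitions of $s_1$- and $s_2$-convexity in the second sense, for every $t\in[0,1]$,
\[
f\left(ta+(1-t)b\right)\le t^{s_1}f(a)+(1-t)^{s_1}f(b),\qquad g\left(ta+(1-t)b\right)\le t^{s_2}g(a)+(1-t)^{s_2}g(b).
\]
Setting $A=f(ta+(1-t)b)$, $B=t^{s_1}f(a)+(1-t)^{s_1}f(b)$, $C=g(ta+(1-t)b)$, $D=t^{s_2}g(a)+(1-t)^{s_2}g(b)$, we have $B-A\ge0$ and $D-C\ge0$, so the elementary inequality $(B-A)(D-C)\ge0$ — the same device $(a-b)(c-d)\ge0$ invoked in Theorem \ref{t2} — rearranges to $AC+BD\ge AD+BC$. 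Note that, exactly as there, no sign hypothesis on $f$ or $g$ is used in this step; only the two $s_i$-convexity inequalities are needed.

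Expanding $AC+BD\ge AD+BC$ and collecting the coefficients of $f(a)g(a)$, $f(a)g(b)$, $f(b)g(a)$, $f(b)g(b)$ gives the pointwise-in-$t$ inequality whose left side is
\[
t^{s_1}f(a)\,g(ta+(1-t)b)+(1-t)^{s_1}f(b)\,g(ta+(1-t)b)+t^{s_2}g(a)\,f(ta+(1-t)b)+(1-t)^{s_2}g(b)\,f(ta+(1-t)b)
\]
and whose right side is
\[
f(ta+(1-t)b)\,g(ta+(1-t)b)+t^{s_1+s_2}f(a)g(a)+t^{s_1}(1-t)^{s_2}f(a)g(b)+t^{s_2}(1-t)^{s_1}f(b)g(a)+(1-t)^{s_1+s_2}f(b)g(b).
\]

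Next I would integrate this inequality over $t\in[0,1]$ and make the substitution $x=ta+(1-t)b$, $(a-b)\,dt=dx$, in every term. As in the proof of Theorem \ref{t2}, $\int_0^1 t^{s_1}g(ta+(1-t)b)\,dt$ becomes $(b-a)^{-(s_1+1)}\int_a^b(b-x)^{s_1}g(x)\,dx$, and likewise for the other three cross terms, yielding the four integrals on the left-hand side of (\ref{2.6}); the product term becomes $\frac{1}{b-a}\int_a^b f(x)g(x)\,dx$; and the four monomial integrals evaluate to $\int_0^1 t^{s_1+s_2}\,dt=\int_0^1(1-t)^{s_1+s_2}\,dt=\frac{1}{s_1+s_2+1}$ and $\int_0^1 t^{s_1}(1-t)^{s_2}\,dt=\int_0^1 t^{s_2}(1-t)^{s_1}\,dt=\beta(s_1+1,s_2+1)$, the last equality by the symmetry $\beta(u,v)=\beta(v,u)$. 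Collecting terms puts the right-hand side in the form $\frac{1}{b-a}\int_a^b f(x)g(x)\,dx+\frac{M(a,b)}{s_1+s_2+1}+\beta(s_1+1,s_2+1)\,N(a,b)$.

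It then remains only to rewrite the Beta factor. From $\beta(s_1+1,s_2+1)=\frac{\Gamma(s_1+1)\Gamma(s_2+1)}{\Gamma(s_1+s_2+2)}$ together with the functional equation $\Gamma(u+1)=u\Gamma(u)$ one obtains $\beta(s_1+1,s_2+1)=\frac{s_1 s_2\,\Gamma(s_1)\Gamma(s_2)}{(s_1+s_2+1)\,\Gamma(s_1+s_2+1)}$, and factoring $\frac{1}{s_1+s_2+1}$ out of the last two summands produces exactly the bracketed expression in (\ref{2.6}). I do not expect any genuine obstacle: the scheme is identical to that of Theorem \ref{t2}, and the only places needing care are performing the four changes of variable consistently and the Gamma-function bookkeeping at the end (in particular, observing that the two mixed Beta integrals coincide, so that they combine into a single $N(a,b)$ term, and checking that $s_1=s_2=1$ indeed reproduces Theorem \ref{t1}).
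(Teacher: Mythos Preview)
Your proposal is correct and follows essentially the same route as the paper: apply the two $s_i$-convexity bounds, use the elementary inequality $(B-A)(D-C)\ge 0$ to obtain the pointwise-in-$t$ inequality, integrate over $[0,1]$, change variables $x=ta+(1-t)b$, and evaluate the resulting integrals via $\beta(s_1+1,s_2+1)=\frac{\Gamma(s_1+1)\Gamma(s_2+1)}{\Gamma(s_1+s_2+2)}=\frac{s_1 s_2\,\Gamma(s_1)\Gamma(s_2)}{(s_1+s_2+1)\Gamma(s_1+s_2+1)}$. Your write-up is in fact a bit more transparent than the paper's in that you make explicit the rearrangement $AC+BD\ge AD+BC$ underlying the ``elementary inequality'' step.
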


\begin{proof}
Since $f$ is $s_{1}-$convex and $g$ is $s_{2}-$convex on $\left[ a,b\right]
, $ we have%
\begin{eqnarray*}
f\left( ta+\left( 1-t\right) b\right) &\leq &t^{s_{1}}f\left( a\right)
+\left( 1-t\right) ^{s_{1}}f\left( b\right) \\
g\left( ta+\left( 1-t\right) b\right) &\leq &t^{s_{2}}g\left( a\right)
+\left( 1-t\right) ^{s_{2}}g\left( b\right)
\end{eqnarray*}%
for all $a,b\in I$ and $t\in \left[ 0,1\right] .$ Now, using the elementary
inequality [\ref{ort}, p.4] $\left( a-b\right) \left( c-d\right) \geq 0$ $%
\left( a,b,c,d\in 
\mathbb{R}
\text{ and }a<b,c<d\right) ,$ we get inequality:%
\begin{eqnarray*}
&&t^{s_{1}}f\left( a\right) g\left( ta+\left( 1-t\right) b\right) +\left(
1-t\right) ^{s_{1}}f\left( b\right) g\left( ta+\left( 1-t\right) b\right) \\
&&+t^{s_{2}}g\left( a\right) f\left( ta+\left( 1-t\right) b\right) +\left(
1-t\right) ^{s_{2}}g\left( b\right) f\left( ta+\left( 1-t\right) b\right) \\
&\leq &f\left( ta+\left( 1-t\right) b\right) g\left( ta+\left( 1-t\right)
b\right) +t^{s_{1}+s_{2}}f\left( a\right) g\left( a\right) \\
&&+t^{s_{1}}\left( 1-t\right) ^{s_{2}}f\left( a\right) g\left( b\right)
+t^{s_{2}}\left( 1-t\right) ^{s_{1}}f\left( b\right) g\left( a\right) \\
&&+\left( 1-t\right) ^{s_{1}+s_{2}}f\left( b\right) g\left( b\right)
\end{eqnarray*}%
Integrating both sides of the above inequality over $\left[ 0,1\right] $, we
deduce that:%
\begin{eqnarray*}
&&f\left( a\right) \int_{0}^{1}t^{s_{1}}g\left( ta+\left( 1-t\right)
b\right) dt+f\left( b\right) \int_{0}^{1}\left( 1-t\right) ^{s_{1}}g\left(
ta+\left( 1-t\right) b\right) dt \\
&&+g\left( a\right) \int_{0}^{1}t^{s_{2}}f\left( ta+\left( 1-t\right)
b\right) dt+g\left( b\right) \int_{0}^{1}\left( 1-t\right) ^{s_{2}}f\left(
ta+\left( 1-t\right) b\right) dt \\
&\leq &\int_{0}^{1}f\left( ta+\left( 1-t\right) b\right) g\left( ta+\left(
1-t\right) b\right) dt \\
&&+f\left( a\right) g\left( a\right) \int_{0}^{1}t^{s_{1}+s_{2}}dt+f\left(
a\right) g\left( b\right) \int_{0}^{1}t^{s_{1}}\left( 1-t\right) ^{s_{2}}dt
\\
&&+f\left( b\right) g\left( a\right) \int_{0}^{1}t^{s_{2}}\left( 1-t\right)
^{s_{1}}dt+f\left( b\right) g\left( b\right) \int_{0}^{1}\left( 1-t\right)
^{s_{1}+s_{2}}dt
\end{eqnarray*}%
By substituting $ta+\left( 1-t\right) b=x,\left( a-b\right) dt=dx$, we
obtain 
\begin{eqnarray*}
&&f\left( a\right) \int_{0}^{1}t^{s_{1}}g\left( ta+\left( 1-t\right)
b\right) dt+f\left( b\right) \int_{0}^{1}\left( 1-t\right) ^{s_{1}}g\left(
ta+\left( 1-t\right) b\right) dt \\
&&+g\left( a\right) \int_{0}^{1}t^{s_{2}}f\left( ta+\left( 1-t\right)
b\right) dt+g\left( b\right) \int_{0}^{1}\left( 1-t\right) ^{s_{2}}f\left(
ta+\left( 1-t\right) b\right) dt \\
&=&\frac{f\left( a\right) }{\left( b-a\right) ^{s_{1}+1}}\int_{a}^{b}\left(
b-x\right) ^{s_{1}}g\left( x\right) dx+\frac{f\left( b\right) }{\left(
b-a\right) ^{s_{1}+1}}\int_{a}^{b}\left( x-a\right) ^{s_{1}}g\left( x\right)
dx \\
&&+\frac{g\left( a\right) }{\left( b-a\right) ^{s_{2}+1}}\int_{a}^{b}\left(
b-x\right) ^{s_{2}}f\left( x\right) dx+\frac{g\left( b\right) }{\left(
b-a\right) ^{s_{2}+1}}\int_{a}^{b}\left( x-a\right) ^{s_{2}}f\left( x\right)
dx \\
&\leq &\int_{0}^{1}f\left( ta+\left( 1-t\right) b\right) g\left( ta+\left(
1-t\right) b\right) dt \\
&&+f\left( a\right) g\left( a\right) \int_{0}^{1}t^{s_{1}+s_{2}}dt+f\left(
a\right) g\left( b\right) \int_{0}^{1}t^{s_{1}}\left( 1-t\right) ^{s_{2}}dt
\\
&&+f\left( b\right) g\left( a\right) \int_{0}^{1}t^{s_{2}}\left( 1-t\right)
^{s_{1}}dt+f\left( b\right) g\left( b\right) \int_{0}^{1}\left( 1-t\right)
^{s_{1}+s_{2}}dt \\
&=&\frac{1}{b-a}\int_{a}^{b}f\left( x\right) g\left( x\right) dx+\frac{%
f\left( a\right) g\left( a\right) +f\left( b\right) g\left( b\right) }{%
s_{1}+s_{2}+1} \\
&&+f\left( a\right) g\left( b\right) \beta \left( s_{1}+1,s_{2}+1\right)
+f\left( b\right) g\left( a\right) \beta \left( s_{2}+1,s_{1}+1\right) \\
&=&\frac{1}{b-a}\int_{a}^{b}f\left( x\right) g\left( x\right) dx+\frac{%
M\left( a,b\right) }{s_{1}+s_{2}+1} \\
&&+f\left( a\right) g\left( b\right) \beta \left( s_{1}+1,s_{2}+1\right)
+f\left( b\right) g\left( a\right) \beta \left( s_{1}+1,s_{2}+1\right) \\
&=&\frac{1}{b-a}\int_{a}^{b}f\left( x\right) g\left( x\right) dx+\frac{%
M\left( a,b\right) }{s_{1}+s_{2}+1} \\
&&+\beta \left( s_{1}+1,s_{2}+1\right) \left[ f\left( a\right) g\left(
b\right) +f\left( b\right) g\left( a\right) \right] \\
&=&\frac{1}{b-a}\int_{a}^{b}f\left( x\right) g\left( x\right) dx+\frac{%
M\left( a,b\right) }{s_{1}+s_{2}+1}+\frac{\Gamma \left( s_{1}+1\right)
\Gamma \left( s_{2}+1\right) }{\Gamma \left( s_{1}+s_{2}+2\right) }N\left(
a,b\right) \\
&=&\frac{1}{b-a}\int_{a}^{b}f\left( x\right) g\left( x\right) dx+\frac{1}{%
s_{1}+s_{2}+1}\left[ M\left( a,b\right) +s_{1}s_{2}\frac{\Gamma \left(
s_{1}\right) \Gamma \left( s_{2}\right) }{\Gamma \left( s_{1}+s_{2}+1\right) 
}N\left( a,b\right) \right]
\end{eqnarray*}%
which completes the proof.\bigskip
\end{proof}

\begin{remark}
In Theorem \ref{t3}, if we choose $s_{1}=s_{2}=1,$ then (\ref{2.6}) reduces
to (\ref{1.6})
\end{remark}

\bigskip

\begin{corollary}
With the above assumptions and under the conditions that $s_{1}=s_{2}=1$ and 
$x=\frac{a+b}{2},$ the following the inequality will be obtained%
\begin{eqnarray}
&&\frac{f\left( a\right) +f\left( b\right) }{2}g\left( \frac{a+b}{2}\right) +%
\frac{g\left( a\right) +g\left( b\right) }{2}f\left( \frac{a+b}{2}\right) 
\notag \\
&\leq &f\left( \frac{a+b}{2}\right) g\left( \frac{a+b}{2}\right) +\frac{%
M\left( a,b\right) }{3}+\frac{N\left( a,b\right) }{6}.  \label{2.9}
\end{eqnarray}
\end{corollary}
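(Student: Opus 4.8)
The plan is to read off (\ref{2.9}) from the pointwise estimate that already occurs in the proof of Theorem \ref{t3}, one line before the integration is carried out, after specializing $s_{1}=s_{2}=1$ and evaluating at the midpoint parameter $t=\tfrac12$ — which is exactly the value making $ta+(1-t)b=\tfrac{a+b}{2}=x$. With $s_{1}=s_{2}=1$ the ``elementary inequality'' step is nothing but the expansion of
\[
\bigl(tf(a)+(1-t)f(b)-f(c)\bigr)\bigl(tg(a)+(1-t)g(b)-g(c)\bigr)\ge 0,\qquad c:=ta+(1-t)b,
\]
whose two factors are nonnegative because $f$ and $g$ are convex; carrying out the expansion gives, for every $t\in[0,1]$,
\begin{align*}
&\bigl(tf(a)+(1-t)f(b)\bigr)g(c)+\bigl(tg(a)+(1-t)g(b)\bigr)f(c)\\
&\qquad\le f(c)g(c)+t^{2}f(a)g(a)+t(1-t)\bigl(f(a)g(b)+f(b)g(a)\bigr)+(1-t)^{2}f(b)g(b).
\end{align*}

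First I would set $t=\tfrac12$. Then $c=\tfrac{a+b}{2}$, the three coefficients $t^{2}$, $t(1-t)$, $(1-t)^{2}$ all equal $\tfrac14$, and the displayed inequality becomes
\[
\frac{f(a)+f(b)}{2}\,g\!\left(\frac{a+b}{2}\right)+\frac{g(a)+g(b)}{2}\,f\!\left(\frac{a+b}{2}\right)\le f\!\left(\frac{a+b}{2}\right)g\!\left(\frac{a+b}{2}\right)+\frac{M(a,b)+N(a,b)}{4},
\]
which is already an inequality of the shape claimed in (\ref{2.9}). To match its right‑hand side exactly, I would then invoke the elementary identity $\tfrac{M}{3}+\tfrac{N}{6}-\tfrac{M+N}{4}=\tfrac{M-N}{12}$ together with $M(a,b)-N(a,b)=\bigl(f(a)-f(b)\bigr)\bigl(g(a)-g(b)\bigr)$: whenever $f$ and $g$ vary in the same monotone sense on $[a,b]$ this difference is $\ge 0$, so $\tfrac14(M+N)\le\tfrac{M}{3}+\tfrac{N}{6}$, and replacing $\tfrac14(M+N)$ by the larger constant yields (\ref{2.9}) verbatim.

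Everything here reduces to one substitution and one sign expansion, so there is no obstacle of the usual analytic kind. The only point that needs attention is the very last step: the midpoint substitution delivers the constant $\tfrac14\bigl(M(a,b)+N(a,b)\bigr)$ on the right, and reconciling it with the constant $\tfrac{M(a,b)}{3}+\tfrac{N(a,b)}{6}$ carried over from the notation of Theorem \ref{t1} is where one must either supply the ordering remark above or else simply record the conclusion with $\tfrac14(M+N)$, a form that is never weaker.
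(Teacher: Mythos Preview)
Your approach is exactly the one the paper intends: the corollary is stated without proof, with only the instruction to set $s_{1}=s_{2}=1$ and $x=\tfrac{a+b}{2}$, which amounts to evaluating the pointwise inequality from the proof of Theorem~\ref{t3} at $t=\tfrac12$. Your computation of that specialization is correct, and so is your observation that it produces the constant $\tfrac14\bigl(M(a,b)+N(a,b)\bigr)$ on the right, not $\tfrac{M(a,b)}{3}+\tfrac{N(a,b)}{6}$.

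The discrepancy you flag is real, and it is a defect in the paper's statement rather than in your argument. The constants $\tfrac13$ and $\tfrac16$ come from integrating $t^{2}$ and $t(1-t)$ over $[0,1]$ (as in Theorem~\ref{t1}); the author has apparently carried them over to the pointwise case by oversight. In fact (\ref{2.9}) as printed is false without an extra hypothesis: on $[0,1]$ with $f(x)=x$ and $g(x)=1-x$ (both affine, hence convex) one gets $M=0$, $N=1$, the left side equals $\tfrac12$, and the right side equals $\tfrac14+\tfrac16=\tfrac{5}{12}$. Your proposed remedy of assuming $f$ and $g$ vary in the same monotone sense does salvage the printed form, but since that assumption is absent from the paper, the clean conclusion is the one you actually derive, with $\tfrac14(M+N)$ on the right.
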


\begin{remark}
Similarly to Hadamard's inequality applications, some applications to
special means can be deduced by the above obtained two new theorems.
\end{remark}

\end{document}